\newtheorem{remark}{Remark}[section]
\newtheorem{theorem}{Theorem}[section]
\newtheorem{proposition}{Proposition}[section]
\newtheorem{lemma}{Lemma}[section]
\newcommand{\Ga}{\Gamma}
\newcommand{\ga}{\gamma}
\newcommand{\BB}{{\mathbb B}}
\newcommand{\XX}{{\mathbb X}}
\newcommand{\R}{{\mathbb R}}
\newcommand{\N}{{\mathbb N}}
\newcommand{\Z}{{\mathbb Z}}
\newcommand{\X}{{\R^d}}
\newcommand{\XXX}{{\mathfrak{X}}}
\newcommand{\La}{\Lambda}
\newcommand{\la}{\lambda}
\newcommand{\B}{\mathcal{B}}
\newcommand{\M}{\mathcal{M}}
\newcommand{\CB}{\mathcal B}
\begin{document}

\title{General contact processes: inhomogeneous models, models on graphs and on manifolds. }

\author{Sergey Pirogov\thanks{
Institute for Information Transmission Problems, Moscow, Russia
(s.a.pirogov@bk.ru).} \and Elena Zhizhina\thanks{Institute for
Information Transmission Problems, Moscow, Russia (ejj@iitp.ru).} }

\date{}

\maketitle

\begin{abstract}
The contact process is a particular case of birth-and-death processes on infinite particle configurations. We consider the contact models on locally compact separable metric spaces.
We prove the existence of a one-parameter set of invariant measures in the critical regime under the condition imposed on the associated Markov jump process. This condition, roughly speaking, requires the separation of any pair of trajectories of this jump process. The general scheme can be applied to the contact process on the lattice in a heterogeneous and random environments as well as to the contact process on graphs and on manifolds.
\\

Keywords: birth and death process, infinite particle configurations, critical regime, correlation functions, hierarchical equations
\end{abstract}

\section{Introduction}


Starting from the pioneer papers of Harris \cite{H}, Holley and Liggett \cite{HL} the contact process has become one of the most widely used population dynamics model, see also the monograph of Liggett \cite{Lig1985}.
While in most of the works the contact processes were considered on the lattice $\Z^d$, much of the interest in the recent years has focused on studying the contact processes running in continuous spaces, see e.g. \cite{FKO, KKP, KS}.
Contact processes are a particular case of continuous time birth and death processes on infinite particle configurations, and one of the basic problems concerning a contact process is to determine a stationary regime and to prove the existence of stationary measures.
In the mathematical literature, the birth and death rates of the contact processes are usually taken to be homogeneous (in space); therefore, the corresponding stationary measures on configurations are translation invariant.
Since homogeneous models do not quite accurately reflect reality due to heterogeneity in biological or social populations, contact processes in general spaces as well as in heterogeneous and random environments are of great importance for a better understanding of real-world networks.

One of the main features of the contact process is the clustering of the system, i.e. particles are grouped into large clouds of high density, which are located at large distances from each other.  It is worth noting that the appearance of a limiting invariant state is only possible in the so-called critical regime, i.e. there is a certain balance between birth and death. As it was shown in \cite{KKP, KKPZ}, in the case of the critical regime, there exists continuum of invariant measures parametrized by the density values. These invariant measures are described by a simple recurrent relation between their correlation functions and create a concrete (and up to our knowledge, completely new) class of random point fields. For all other regimes, the density of the system tends either to $\infty$ or to $0$ as time grows. The existence of invariant measures in the marked contact model in $\X$ with a compact spin space describing dynamics of a population with mutations was proved in \cite{KPZh}.

The goal of this work is to study the contact process in the critical regime running on general spaces. Our approach is based on the analysis of the infinite system of hierarchical equations for correlation functions, that has been studied earlier for the contact process in $\X$, see e.g. \cite{FKO, KKP}. We discuss these constructions and present the main result in Section 3. In Section 2, we formulate assumptions on the model that imply the existence of invariant measures for the contact processes running on general state spaces. In particular, our approach can be applied to the contact processes on a hyperbolic (Lobachevsky) space, on a Cayley tree as well as to the contact process on $\Z^d$ in inhomogeneous and random environments, see Section 4. In Section 4 we also present all  known results concerning invariant measures of the contact process in $\X$, $d \ge 1$.  Finally, Section 5 contains the proof of the main results.

\section{The model}

Let $\XXX$ be a locally compact separable metric space, ${\B}({\XXX})$ be its Borel $\sigma$-algebra, and $m$ will denote a locally finite  Borel measure on ${\B} (\XXX)$, i.e. $m$ is finite on compact sets.
Denote by ${\B}_{\mathrm{b}} ({\XXX})$ the system of all compact
sets from ${\B}({\XXX})$.
The continuous contact model is regarded as a  Markov process on $\XXX$ which is a
particular case of the general birth-and-death process.

By analogy with the continuous contact model in $\X$, see e.g. \cite{KKP, KS}, the phase space of a general contact process is the space of locally finite configurations in $\XXX$. Namely,
\begin{equation} \label{confspace}
\Ga =\Ga\bigl(\XXX\bigr) :=\Bigl\{ \ga \subset \XXX \Bigm| |\ga\cap\La
|<\infty, \ \mathrm{for \ all } \ \La \in {\B}_{\mathrm{b}}
(\XXX)\Bigr\}.
\end{equation}
Here $|\cdot|$ denotes the number of elements of a~set. We can identify each
$\ga\in\Ga$ with the non-negative Radon measure $\sum_{x\in
\gamma }\delta_x\in \M(\XXX)$, where $\delta_x$ is
the Dirac measure with unit mass at $x$, $\sum_{x\in\emptyset}\delta_x$ is, by definition, the zero measure, and $\M(\XXX)$ denotes the space of all
non-negative Radon measures on $\B(\XXX)$. This identification allows us to endow
$\Ga$ with the topology induced by the vague topology on
$\M(\XXX)$, i.e. the weakest topology on $\Ga$
with respect to which all the mappings
\begin{equation}\label{gentop}
    \Ga\ni\ga\mapsto \sum_{x\in\ga} f(x)\in{\R}
\end{equation}
are continuous for any $f\in C_0(\XXX)$ that is the set of all continuous functions on $\XXX$ with compact supports.
The topological space $\Gamma(X)$ for any $X\in \CB(\XXX)$ can be defined in a similar way. The Borel $\sigma$-algebra on $\Gamma(X)$ is denoted by $\CB(\Gamma(X))$.

The contact model is given by a heuristic
generator defined on a proper class of functions  $F:
\Gamma \to \R$ as follows:
\begin{align}
 \begin{aligned}\label{generator}
(L F)(\gamma) &= \sum_{ x \in \gamma}\left[F(\gamma \backslash \{x\}) - F(
\gamma)\right] \\
&+   \int\limits_{\XXX} \sum_{x \in \gamma}  a(y,x) (F(\gamma \,\cup
\{y\} ) - F(\gamma)) m(dy).
 \end{aligned}
 \end{align}
In the sequel, for simplicity of notations, we just write $x$
instead of $\{x\}$. The first term in \eqref{generator} corresponds to the death of the particles. Namely, each $x$ of the configuration $\gamma\in\Ga$ dies with the death rate $1$. The second term of \eqref{generator} describes the birth of a new particle in a small neighborhood $dy$ of the point $y$ with the
 birth rate density $b(y,\gamma):=\sum_{x \in \gamma} a(y,x)$.
\medskip

We assume that  $a:\XXX \times \XXX \to [0, \infty)$ is a non-negative bounded measurable function satisfying the following conditions:
\begin{enumerate}
\item {\it Critical regime} condition:
\begin{equation}\label{2a}
 \int\limits_{\XXX} a(x,y) m(dy) \ = \ 1 \quad \mbox{for all } x \in \XXX;
\end{equation}
\item {\it Transience} condition.
Let us consider the jump Markov process (random walk in continuum) with generator
\begin{equation}\label{L}
\mathcal{L} f(x) =  \int\limits_{\XXX} a(x,y) \big( f(y) - f(x) \big)  m(dy).
\end{equation}
Then we assume that for any two independent copies $X(t)$ and $Y(t)$ of this process starting with $X(0)=x$ and $Y(0)=y$ the following condition holds
\begin{equation}\label{2b}
\sup\limits_{x,y} \ \int\limits_{0}^{\infty} \mathbb{E}_{x,y} a(X(t), Y(t)) dt < Q
\end{equation}
with a constant $Q>0$. Moreover, we assume that the integral in \eqref{2b} converges uniformly in $x$, $y$.
\end{enumerate}
\medskip

\begin{remark}
The sufficient condition for \eqref{2b} together with required uniform convergence reads
\begin{equation}\label{suf-cond}
\int\limits_0^{\infty}  \sup\limits_{x, y} \ \mathbb{E}_x a(X(t), y) dt < Q.
\end{equation}
\end{remark}

\begin{proof} Denote by  $p (x,dy,t)$ the transition function of the Markov jump process with generator \eqref{L} at time $t$. Then we get
$$
\sup\limits_{x,y} \ \int\limits_{0}^{\infty} \mathbb{E}_{x,y} a(X(t), Y(t)) dt =
\sup\limits_{x,y} \ \int\limits_0^{\infty} \int\limits_{\XXX} \int\limits_{\XXX} a(x', y') p(x, dx',t) p(y, dy',t) dt \le
$$
$$
\sup\limits_{y} \ \int\limits_0^{\infty} \int\limits_{\XXX}  \Big( \sup\limits_{x} \  \int\limits_{\XXX}  a(x', y') p(x, dx', t) \Big)  p(y, dy',t)  dt =
$$
$$
\sup\limits_{y} \ \int\limits_0^{\infty}  \int\limits_{\XXX}  \ \Big( \sup\limits_{x} \ \mathbb{E}_x a(X(t), y')  \Big)  p(y, dy',t) dt \le
$$
$$
\int\limits_0^{\infty} \sup\limits_{y} \ \int\limits_{\XXX}  \Big( \sup\limits_{y'} \sup\limits_{x} \ \mathbb{E}_x a(X(t), y') \Big)  p(y, dy',t) dt =
\int\limits_0^{\infty}  \sup\limits_{x, y'} \ \mathbb{E}_x a(X(t), y') dt.
$$
Therefore, condition \eqref{suf-cond} implies the uniform convergence in \eqref{2b}.
\end{proof}

\section{Time evolution of correlation functions. Main results}

The study of evolution of the infinite-particle system generated by the operator \eqref{generator} may be realized through
the {\em forward Kolmogorov} (or {\em Fokker--Planck}) equation with the evolution operator $L$ for probability measures (states) on the configuration space $\Gamma$, i.e.
\begin{equation}\label{FPE-init}
  \frac{d}{d t} \mu_t(F) = \mu_t(LF),
  \quad t>0, \quad \mu_t\bigr|_{t=0}=\mu_0,
\end{equation}
where $$\mu(F):=\int_\Ga F(\ga)\,d\mu(\ga)$$.

Denote by ${\cal M}_{fm}(\Gamma)$ the set of all probability
measures $\mu$ which have finite local moments of all orders, i.e.
$$
\int_{\Gamma} |\gamma_{\Lambda} |^{n} \ \mu (d \gamma) \ < \ \infty
$$
for  all $\Lambda \in {\cal B}_b(\XXX)$ and $n \in N$, and let  ${\cal M}_{\rm{corr}}(\Gamma)$  be the subclass of ${\cal M}_{fm}(\Gamma)$ consisting of those probability measures on $\Ga$ for which  correlation functions (correlation measures densities) exist.  The terminology originates in statistical mechanics (see, for instance, \cite[Ch. 4]{R}) where the local densities of the corresponding correlation measure $\rho_\mu$ w.r.t. the Lebesgue measure in $\XXX^n = \mathbb{R}^{dn}$ are called correlation functions. For the correlation functions in the general space $\XXX$, see e.g. \cite{L1, L2}.

The evolution equation for the system of $n$-point correlation functions corresponding to the continuous contact model in $\X$ has been derived previously in \cite{KKP, KKPZ}.
This equation for the general contact model in
$\XXX$ can be considered in the same way. The equation has the following recurrent forms:
\begin{equation}\label{59}
\frac{\partial k_{t}^{(n)}}{\partial t} \ = \ \hat L_n^{\ast} k_{t}^{(n)} \
+ \ f_{t}^{(n)}, \quad n\ge 1; \qquad k_{t}^{(0)} \equiv 1,
\end{equation}
where
\begin{align}
 \begin{aligned}\label{korf}
 \hat L^{\ast}_n k^{(n)}(x_1, &\ldots, x_n)  =  - n \,k^{(n)}(x_1,
\ldots, x_n) \\
& + \sum_{i=1}^n \int\limits_{\XXX} a(x_i, y) k^{(n)}(x_1, \ldots, x_{i-1}, y,
x_{i+1}, \ldots, x_n) m(dy).
\end{aligned}
\end{align}
Here $f_{t}^{(n)}$ are functions on $\XXX^{n}$ defined for $n \ge 2$ by
\begin{equation}\label{f}
f_{t}^{(n)}(x_1, \ldots, x_n) \ = \ \sum_{i=1}^n  k_{t}^{(n-1)}(x_1,
\ldots,\check{x_i}, \ldots, x_n) \sum_{j\neq i} a(x_i, x_j),
\end{equation}
and  $f_{t}^{(1)} \equiv 0$.

We consider here the initial data $k_0 = \{k_0^{(n)} \}$ corresponding to the Poisson measure $\pi_\varrho$ with intensity $\varrho$:
\begin{equation}\label{k0}
k_0^{(0)}= 1, \quad k_0^{(n)}(x_1, \ldots, x_n) = \varrho^n,  \; n\ge 1.
\end{equation}

Let $\XX_{n} = \BB({\XXX}^n)$ be the Banach space of all measurable real-valued bounded functions on $\XXX^n$ with the $\sup$-norm.
Consider the operator $\hat L_n^{\ast}$ as an operator on the Banach space $\XX_{n}$ for any $n\geq 1$. Then it is a bounded linear operator in $\XX_{n}$, and the arguments based on the variation of parameters formula yields that
\begin{equation}\label{61A}
k_{t}^{(n)} \ = \ e^{t \hat L_n^{\ast}} k_{0}^{(n)} \ + \  \int\limits_0^t e^{(t-s) \hat
L_n^{\ast}} f_s^{(n)} \ ds,
\end{equation}
where $f_s^{(n)}$ is expressed through $k_s^{(n-1)}$ by (\ref{f}).
Thus, the solution to the Cauchy problem \eqref{59} in  $\XX_{n}$ with arbitrary initial values $k_{0}^{(n)}\in\XX_{n}$ exists and is unique provided $f_{t}^{(n)}$ is constructed recurrently via the
solution to the same Cauchy problem \eqref{59} for $n-1$.



The goal of this paper is to prove the existence of a family of invariant measures of the general contact process in the critical regime.
These measures are described in terms of the corresponding correlation functions
$\{k^{(n)}\}_{n\geq 0}$ as solutions to the following system satisfying the Lenard positivity condition  (see (\ref{LP}) below):
\begin{equation}\label{Last}
\hat L^{\ast}_n k^{(n)} + f^{(n)}=0, \quad n \ge 1, \quad
k^{(0)}\equiv 1,
\end{equation}
where $\hat L_n^{\ast}, \, f^{(n)}$ were defined by \eqref{korf}-\eqref{f}.
In the sequel, we say that $k:\Ga_{0}\to \R$ solves the system \eqref{Last} in the Banach spaces $(\XX_{n})_{n\geq 1}$ if the corresponding $k^{(n)}\in\XX_{n}$, $n\geq 1$ solve \eqref{Last}.

The main result of the paper is the following theorem.

\begin{theorem}\label{mainth} {\it Assume that the contact process satisfies conditions (\ref{2a}), (\ref{2b}). Then the following assertions hold.

{(i)} For any positive constant $\varrho >0$ there exists a
unique probability measure $\mu^{\varrho} \in {\cal M}_{\rm{corr}}(\Gamma)$ on $\Ga$ such that its
correlation function $k_{\varrho}: \Ga_{0}\to\R_{+}$ solves (\ref{Last})  in the Banach spaces $(\XX_{n})_{n\geq 1}$, and  the corresponding system $\{k_{\varrho}^{(n)}\}_{n\geq 1}$ satisfies $k_\varrho^{(1)}\equiv \varrho$. Moreover, there exists a positive constant $D$ such that
\begin{equation}\label{estimate}
k^{(n)}_\varrho (x_1, \ldots, x_n) \ \le   D  Q^n (n!)^2
\qquad  \text{for all} \quad (x_1, \ldots, x_n)\in {\XXX}^{n},
\end{equation}
where $Q$ is the same constant as in  (\ref{2b}).

{(ii)} Let $\{k_{\varrho, t}^{(n)}\}_{n\geq 1}$ be the
solution to the Cauchy problem (\ref{59}) with initial value (\ref{k0}). Then
\begin{equation}\label{Th1-2}
\| k_{\varrho, t}^{(n)} \ - \ k_\varrho^{(n)} \|_{\XX_n} \ \to \ 0, \quad t \to \infty, \quad \forall n\geq 1.
\end{equation}
}
\end{theorem}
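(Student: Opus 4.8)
The plan is to recognise $\hat L_n^{\ast}$ as the generator of $n$ independent copies of the jump process with generator $\mathcal L$: under the critical regime condition \eqref{2a} one has $\hat L_n^{\ast}=\sum_{i=1}^n\mathcal L_i$, with $\mathcal L_i$ acting on the $i$-th variable, so that $e^{t\hat L_n^{\ast}}$ is the Markov semigroup $(e^{t\hat L_n^{\ast}}g)(x_1,\dots,x_n)=\mathbb E_{x_1,\dots,x_n}\,g(X_1(t),\dots,X_n(t))$; in particular $e^{t\hat L_n^{\ast}}$ is positivity preserving, contractive on $\XX_n$, and fixes the constants. The one crucial estimate is: if $g\in\XX_n$ carries a factor $a(x_i,x_j)$, say $0\le g(x_1,\dots,x_n)\le c\,a(x_i,x_j)$, then $\int_0^\infty\|e^{s\hat L_n^{\ast}}g\|_{\XX_n}\,ds\le c\,Q$ and, more precisely, $\bigl\|\int_T^\infty e^{s\hat L_n^{\ast}}g\,ds\bigr\|_{\XX_n}\to0$ as $T\to\infty$, by \eqref{2b} together with the assumed uniform convergence of the integral there. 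Applying this to each of the $n(n-1)$ summands of $f^{(n)}$ in \eqref{f}, using $k^{(n-1)}\ge0$ and $f^{(1)}\equiv0$, and noting $e^{t\hat L_n^{\ast}}k_0^{(n)}=\varrho^n$, formula \eqref{61A} yields the recursion $\|k_{\varrho,t}^{(n)}\|_{\XX_n}\le\varrho^n+n(n-1)\,Q\,\sup_{s}\|k_{\varrho,s}^{(n-1)}\|_{\XX_{n-1}}$; since $(n!)^2-n(n-1)\bigl((n-1)!\bigr)^2=n!\,(n-1)!$, a straightforward induction gives \eqref{estimate} with a constant $D=D(\varrho,Q)$, uniformly in $t\ge0$.

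Next I would prove \eqref{Th1-2} by induction on $n$, simultaneously defining $k_\varrho^{(n)}$ as the $\XX_n$-limit of $k_{\varrho,t}^{(n)}$. For $n=1$ both functions equal the constant $\varrho$. For the inductive step set $\varepsilon_{n-1}(\tau):=\|k_{\varrho,\tau}^{(n-1)}-k_\varrho^{(n-1)}\|_{\XX_{n-1}}$, which tends to $0$ and is bounded. Writing $f_\infty^{(n)}$ for the expression \eqref{f} built from $k_\varrho^{(n-1)}$ and $k_\varrho^{(n)}:=\varrho^n+\int_0^\infty e^{u\hat L_n^{\ast}}f_\infty^{(n)}\,du$ (the integral converging by the crucial estimate), \eqref{61A} gives
\[
k_{\varrho,t}^{(n)}-k_\varrho^{(n)}=\int_0^t e^{u\hat L_n^{\ast}}\bigl(f_{t-u}^{(n)}-f_\infty^{(n)}\bigr)\,du-\int_t^\infty e^{u\hat L_n^{\ast}}f_\infty^{(n)}\,du .
\]
The second term tends to $0$ in $\XX_n$ by the crucial estimate. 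In the first term each summand of $f_{t-u}^{(n)}-f_\infty^{(n)}$ still carries an $a(x_i,x_j)$ factor, with coefficient $\le\varepsilon_{n-1}(t-u)$; splitting $\int_0^t=\int_0^{t/2}+\int_{t/2}^t$, on the first piece one bounds $\varepsilon_{n-1}(t-u)\le\sup_{\tau\ge t/2}\varepsilon_{n-1}(\tau)\to0$ and uses $\int_0^\infty\|e^{u\hat L_n^{\ast}}(a(x_i,x_j)\text{-term})\|_{\XX_n}\,du\le Q$, while on the second piece one keeps $\varepsilon_{n-1}$ bounded and uses that $\sup_{x,y}\int_{t/2}^\infty\mathbb E_{x,y}a(X(u),Y(u))\,du\to0$ — again the uniform convergence in \eqref{2b}. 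Hence $\|k_{\varrho,t}^{(n)}-k_\varrho^{(n)}\|_{\XX_n}\to0$. Passing to the limit $t\to\infty$ in \eqref{59} (all operators being bounded on $\XX_n$), or directly from $\hat L_n^{\ast}\int_0^T e^{u\hat L_n^{\ast}}f_\infty^{(n)}du=e^{T\hat L_n^{\ast}}f_\infty^{(n)}-f_\infty^{(n)}$ together with $\liminf_T(e^{T\hat L_n^{\ast}}f_\infty^{(n)})(x_1,\dots,x_n)=0$ (as $f_\infty^{(n)}\ge0$ and the integral converges pointwise), one sees that $\{k_\varrho^{(n)}\}$ solves \eqref{Last}; letting $t\to\infty$ in the uniform bound of the first paragraph gives \eqref{estimate} for $k_\varrho^{(n)}$.

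It remains to produce the measure $\mu^{\varrho}$ and to prove uniqueness. For existence I would verify that each $k_{\varrho,t}^{(n)}$ satisfies the Lenard positivity condition \eqref{LP} — for instance by approximating the infinite-volume dynamics by the genuine birth-and-death Markov process in a compact region $\Lambda\in\B_{\mathrm b}(\XXX)$ started from the Poisson measure in $\Lambda$, whose time-$t$ law is an honest point process, and passing to the limit $\Lambda\uparrow\XXX$, the uniform bound \eqref{estimate} providing the needed control — and then let $t\to\infty$: Lenard positivity is preserved under the locally uniform convergence $k_{\varrho,t}^{(n)}\to k_\varrho^{(n)}$, so by Lenard's theorem there is $\mu^{\varrho}\in\mathcal M_{\rm{corr}}(\Gamma)$ with correlation functions $\{k_\varrho^{(n)}\}$ and $k_\varrho^{(1)}\equiv\varrho$. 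For uniqueness, let $\mu$ be another such measure with correlation functions $\{k^{(n)}\}$; by the recursive structure of \eqref{Last} it suffices to show, by induction on $n$, that $k^{(n)}=k_\varrho^{(n)}$. Assuming agreement up to level $n-1$, the difference $h:=k^{(n)}-k_\varrho^{(n)}$ lies in $\XX_n$ and satisfies $\hat L_n^{\ast}h=0$, i.e. $h(\vec x)=\mathbb E_{\vec x}\,h(X_1(t),\dots,X_n(t))$ for all $t$. The particular solution $\int_0^\infty e^{u\hat L_n^{\ast}}f_\infty^{(n)}du$ decays to $0$ as the points $x_1,\dots,x_n$ are driven apart — here the separation of pairs of trajectories encoded in \eqref{2b} is used — and the clustering/factorisation of the invariant correlation functions forces $k^{(n)}(\vec x)-\varrho^n$ to do likewise; hence $h$ is a bounded $\hat L_n^{\ast}$-harmonic function vanishing at infinity, and since the $n$-fold process is transient, $h(X_1(t),\dots,X_n(t))\to0$ almost surely, so $h\equiv0$ by dominated convergence.

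The main obstacle is precisely this last point: showing that the recursion \eqref{Last} admits no spurious solution, equivalently that no nontrivial bounded $\hat L_n^{\ast}$-harmonic perturbation is compatible with Lenard positivity and the prescribed first correlation. This is where the hypothesis \eqref{2b} — the ``separation of any pair of trajectories'' — is genuinely needed, both to get the decay at infinity of the particular solution and to get the Liouville-type property of $\hat L_n^{\ast}$, on general state spaces such as trees or the Lobachevsky space where Choquet--Deny-type rigidity is unavailable a priori. By contrast, the bound \eqref{estimate} and the convergence \eqref{Th1-2} are soft consequences of the decomposition $\hat L_n^{\ast}=\sum_i\mathcal L_i$ and the uniform integrability in \eqref{2b}.
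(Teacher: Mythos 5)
Your construction of $\{k_\varrho^{(n)}\}$, the bound \eqref{estimate}, the convergence \eqref{Th1-2}, and the existence of $\mu^\varrho$ all follow essentially the paper's route: the tensor decomposition of $e^{t\hat L_n^{\ast}}$ into $n$ one-particle Markov semigroups (Lemma~\ref{3.2}), the identification $e^{t\hat L_n^{\ast}}a(\cdot_i,\cdot_j)=\mathbb{E}_{x_i,x_j}a(X(t),Y(t))$, the ``crucial estimate'' $\int_0^\infty\|e^{s\hat L_n^{\ast}}f_{i,j}\|\,ds\le K_{n-1}Q$ with vanishing tails (the paper's \eqref{vij} and \eqref{1term}), the recursion $K_n\le n^2K_{n-1}Q+\varrho^n$, the near/far splitting of $\int_0^t e^{(t-s)\hat L_n^{\ast}}(f_s^{(n)}-f_\varrho^{(n)})\,ds$ in the induction for (ii), and Lenard positivity of $k_{\varrho,t}^{(n)}$ via the finite-configuration dynamics, preserved in the limit $t\to\infty$. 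That part is sound and matches the paper, including the arithmetic $(n!)^2-n(n-1)((n-1)!)^2=n!\,(n-1)!$.

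The genuine gap is in your uniqueness argument, and in your closing assessment of where the difficulty lies. You read ``unique'' as uniqueness of the Lenard-positive solution of the system \eqref{Last} with $k^{(1)}\equiv\varrho$, and to exclude a bounded $\hat L_n^{\ast}$-harmonic perturbation $h$ you assert that ``clustering/factorisation of the invariant correlation functions'' forces $k^{(n)}(\vec x)-\varrho^n$ to vanish as the points separate. Nothing in the hypotheses gives you this: an arbitrary measure whose correlation functions solve \eqref{Last} need not be mixing, and on a general $\XXX$ there is no spatial-decay structure to appeal to (the whole point of the paper is to avoid Fourier/decay arguments). Likewise, ``the $n$-fold process is transient, so $h(X_1(t),\dots,X_n(t))\to0$ a.s.'' is unsupported: condition \eqref{2b} controls only $\int_0^\infty\mathbb{E}_{x,y}a(X(t),Y(t))\,dt$, i.e.\ separation of pairs as seen through $a$; it yields neither transience of the coordinates nor a Liouville property for $\hat L_n^{\ast}$. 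The paper neither proves nor needs this stronger rigidity: the uniqueness in Theorem~\ref{mainth} is the one supplied by Lenard's theorem (Proposition~\ref{propos2}) applied to the specific system \eqref{52}, once Lenard positivity and the moment-growth condition \eqref{MG} (a consequence of \eqref{estimate}) are verified. So either drop the Liouville-type step and invoke Lenard uniqueness of the measure determined by the constructed correlation functions, or supply a genuinely new argument for the rigidity you assert; as written, that step fails. Correspondingly, your final paragraph inverts the structure of the proof: the parts you call ``soft'' are the actual content, and no Choquet--Deny-type statement is used anywhere.
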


\medskip
The main strategy of the proof follows the same line as the proof in the case $\XXX=\X$, see \cite{KKP, KKPZ}. However, in the present paper we should modify some of steps of the previous proof for the general models.
These modifications involve using the condition \eqref{2b} instead of using the Fourier transform for homogeneous models in $\X$.

\medskip
\begin{remark}
We can include in our model a possibility to jump. The analogous model in $\X$ has been considered earlier in \cite{KKPZ, KKS}.
More precisely, let us consider the following heuristic generator
$L\ + \ L_J$, where $L$  was defined by (\ref{generator}),
\begin{equation}\label{LJ}
L_J F(\gamma)\ = \  \int\limits_{\XXX} \sum_{x\in \gamma}
J(y,x) \Big( F ((\gamma\setminus x) \cup y)-F(\gamma) \Big) \, m(dy).
\end{equation}
Suppose that the total jump rate $\int J(y,x) m(dy)$
is uniformly bounded in $x$:
\begin{equation}\label{2aJ-bis}
\sup\limits_x \ \int\limits_{\XXX} J(y,x) m( dy) \ < \ C.
\end{equation}
Then the criticality condition is
\begin{equation}\label{criJ}
\int\limits_{\XXX} \big(a(x,y)+J(x,y) - J(y,x)\big) \ m(dy) =1
\end{equation}
("birth"+"immigration"-"emigration"="mortality"). The operator ${\mathcal{L}}_J$ analogous to \eqref{L} then takes the form
\begin{equation}\label{LambdaJ}
{\mathcal{L}}_J f(x) =  \int\limits_{\XXX} \big( a(x,y) + J(x,y) \big) \big( f(y) - f(x) \big) m(dy),
\end{equation}
and "transience" condition \eqref{2b} can be written as
\begin{equation}\label{2bJ}
\sup\limits_{x,y} \ \int\limits_{0}^{\infty} \mathbb{E}_{x,y} a( \tilde X(t), \tilde Y(t)) dt < Q,
\end{equation}
where $a(x,y)$ is the same birth rate as above satisfying  \eqref{2a}, \eqref{2b}, while $\tilde X(t)$ and $\tilde Y(t)$ are two independent copies of the Markov process with generator ${\mathcal{L}}_J$ given by \eqref{LambdaJ}.
\end{remark}



\section{Examples}

We start this section with the homogeneous contact model in $\X$ that has been studied in \cite{KKP, KKPZ, KPZh}. Other examples are new.

1. {\bf The homogeneous contact model in $\XXX=\X$ generated by birth rates }.
The homogeneous contact model in $\X$ has been studied in papers \cite{KKP, KKPZ, KPZh}, where we have formulated the condition on $a(x-y)$ guaranteeing the existence of a family of invariant measures of the contact model in the critical regime in any dimension $d \ge 1$. Namely,
we assume that $a(\cdot)$ possesses the following properties:\\
- Boundedness and Normalization
\begin{equation}\label{a1-R}
a(x) \ge 0; \quad  a(x) \in L^{\infty}(\mathbb R^d) \cap L^1(\mathbb R^d), \qquad \int_{\mathbb R^d} a(x) dx =1
\end{equation}
- Regularity condition
\begin{equation}\label{2b-R}
 \hat a (p) \ := \ \int\limits_{\R^{d}} e^{-i(p,u)} a(u) du \in
L^1(\R^{d}),
\end{equation}
-  Existence of the second moment in dimensions $d \ge 3$
\begin{equation}\label{a2-R}
 \int_{\mathbb R^d} |x|^2 a(x) dx  < \infty;
\end{equation}
-  Heavy tail conditions in dimensions $d =1,\ 2$:
\begin{equation}\label{a2.2}
a(x) \ \sim \ \frac{1}{|x|^{\alpha+2}} \quad \mbox{as } \; |x| \to \infty, \; 0<\alpha<2, \quad (d=2),
\end{equation}
\begin{equation}\label{a2.1}
a(x) \ \sim \ \frac{1}{|x|^{\alpha+1}} \quad \mbox{as } \; |x| \to \infty, \; 0<\alpha<1, \quad (d=1).
\end{equation}
Then the statements of Theorem \ref{mainth} are true, see \cite{KKP, KKPZ}. In these cases, the fulfillment of the transience condition \eqref{2b} is verified using the Fourier transform.

We consider in  \cite{KPZh} a marked continuous contact model on $\XXX= \X~\times~S, d \ge 3$, where $S$ is a compact metric space. The birth rates $a(x,y)$ were defined as
\begin{equation}\label{quasi-a}
a(x,y) = \alpha (\tau(x) - \tau(y)) \, \mathcal{Q} (\sigma(x), \sigma(y)),
\end{equation}
where $\tau$ and $\sigma$ are projections of $\XXX$ on $\X$ and $S$
respectively, $\alpha(\cdot) \ge 0$ is a function on $\X$ satisfying conditions \eqref{a1-R} - \eqref{a2-R} (the case $d \ge 3$).  We suppose that the function $\mathcal{Q}$ on $S \times S$ is continuous (and so bounded) and strictly positive. Moreover we assume that the corresponding integral operator with kernel $\mathcal{Q}(\cdot, \cdot)$ has the maximal in absolute value eigenvalue equal to 1. Then the statements of Theorem \ref{mainth} are true, see  \cite{KPZh}.
\medskip

2. {\bf The symmetric contact model on the hyperbolic (Lobachevsky) plane: $\XXX=L$.}
Let  $\rho(x,y)$ be the hyperbolic distance and $m(dx)$ be the corresponding measure on $\XXX$. Consider a continuous time random walk on $\XXX$ with the generator \eqref{L}, where $a(x,y)$ depends only on  $\rho(x,y)$:
\begin{equation}\label{rho-lob}
a(x,y) = a(\rho(x,y)) = a(y,x), \quad \mbox{ and } \;  a(x,y)=0 \quad \mbox{if } \; \rho(x,y)> h,
\end{equation}
for some $h$. We assume also that
\begin{equation}\label{2a-lob}
 \int\limits_{\XXX} a(x,y) m(dy) \  = \  1 \quad \mbox{for all } x \in \XXX.
\end{equation}
Then \eqref{2a-lob} is the same as the critical regime condition  \eqref{2a}, and it remains to check the fulfillment of the transience condition \eqref{2b}, or equivalently condition \eqref{suf-cond}.

Denote by $D(y,h)$ a disc centered at $y$ of radius $h$:
$$
D(y,h) = \{ y' \in \XXX: \ \rho(y,y') \le h \},
$$
and let $P(x, D(y,h), t)$ be the probability for the Markov jump process $X(t)$ with generator $L$ defined in \eqref{L} and starting at $x \in \XXX$ to reach $D(y,h)$ at time $t$:
$$
P(x, D(y,h), t) = \Pr \big( X(t) \in D(y,h)| \ X(0) =x \big).
$$

\begin{lemma}\label{Lemma-Lob}
There exist $\varkappa>0$ and $C(h)$ such that the probability $P(x, D(y,h), t)$ satisfies the following estimate
\begin{equation}\label{Lob-lemma}
P(x, D(y,h), t) \le C(h) e^{-\varkappa \, t} \quad \mbox{for all } \; x, \ y \in \XXX.
\end{equation}
\end{lemma}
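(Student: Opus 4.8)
The plan is to exploit the fact that the jump process $X(t)$ with generator $\mathcal{L}$ from \eqref{L} moves at bounded speed — each jump has size at most $h$ by \eqref{rho-lob} — while the hyperbolic plane has exponential volume growth, so that a ball $D(y,h)$ reachable by $n$ jumps lies inside $D(x, nh)$ but occupies an exponentially small fraction of the "angular" directions available at distance $\rho(x,y)$. First I would set up a comparison with the embedded discrete-time random walk: since the total jump rate $\int a(x,y)\,m(dy) = 1$ by \eqref{2a-lob}, the process $X(t)$ makes jumps at the epochs of a rate-$1$ Poisson process $N(t)$, and between jumps it is constant. Thus $P(x, D(y,h), t) = \sum_{n\ge 0} e^{-t}\frac{t^n}{n!}\, \Pr(\hat X_n \in D(y,h))$, where $\hat X_n$ is the embedded discrete chain started at $x$.

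The heart of the matter is a geometric bound on $q_n(x,y) := \Pr(\hat X_n \in D(y,h))$. Since each step moves a distance at most $h$, we have $q_n(x,y) = 0$ whenever $\rho(x,y) > (n+1)h$, so it suffices to bound $q_n$ when $n \ge \rho(x,y)/h - 1 =: n_0$. For such $n$ I would estimate $q_n$ by a uniform bound that decays geometrically in $n$ itself: the claim is that there exist $\theta \in (0,1)$ and $C_1 = C_1(h)$ with $\sup_{x,y} q_n(x,y) \le C_1 \theta^n$ for all $n \ge 1$. To see this, note that from any position $z$, the one-step kernel $a(z, \cdot)\,m(d\cdot)$ is a probability measure supported on $D(z,h)$; because the hyperbolic area of $D(z,h)$ is a fixed constant $V(h)$ while the area of the annular shell at hyperbolic distance $r$ grows like $\sinh r$, the chain has a strong outward drift once it is far from the origin, and the probability that after one step it is closer to a fixed target $y$ (rather than being "carried past" in an exponentially wide set of directions) is bounded away from $1$ uniformly. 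A cleaner route: fix $y$ and let $g_r = \sup_{z:\,\rho(z,y)=r} \Pr_z(\rho(\hat X_1, y) \le r - h/2)$; using that the forward light-cone in hyperbolic space at distance $r$ subtends an exponentially small solid angle, one gets $g_r \le 1 - c$ for a constant $c>0$ independent of $r$ once $r \ge h$, hence the distance-to-$y$ process is stochastically dominated by a process with uniform positive drift, giving the geometric bound on $q_n$ by a standard large-deviations / supermartingale argument.

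Granting $\sup_{x,y} q_n(x,y) \le C_1\theta^n$, the conclusion follows by a direct computation: for $t \ge 1$,
\begin{align*}
P(x, D(y,h), t) &= \sum_{n\ge 0} e^{-t}\frac{t^n}{n!}\, q_n(x,y)
\ \le\ e^{-t}q_0(x,y) + C_1 \sum_{n\ge 1} e^{-t}\frac{(\theta t)^n}{n!} \\
&\le e^{-t} + C_1 e^{-t}e^{\theta t} \ =\ e^{-t} + C_1 e^{-(1-\theta)t},
\end{align*}
and since $q_0(x,y) = \mathbf{1}_{\rho(x,y)\le h}$ one can absorb both terms into $C(h) e^{-\varkappa t}$ with $\varkappa = \min\{1, 1-\theta\} > 0$ and a suitable $C(h)$. (If one wishes to extract the small-$t$ régime as well, note $P \le 1$ always, so the bound with $C(h) \ge e^{\varkappa}$ holds trivially for $t \le 1$.)

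The main obstacle is the geometric step bound $\sup_{x,y} q_n(x,y) \le C_1\theta^n$: it requires genuinely using the negative curvature — on Euclidean space the analogous $q_n$ decays only polynomially (like $n^{-d/2}$), which would not give exponential decay in $t$. The key hyperbolic input is that the "forward cone" of points within distance $kh$ of $y$, as seen from a far point $x$, has volume growing only like $e^{kh}$ whereas the sphere of radius $\rho(x,y)$ around $x$ has volume $\sim e^{\rho(x,y)}$, so the chain must essentially travel a geodesic toward $y$ to hit $D(y,h)$, an event of exponentially small probability; quantifying this uniformly in $x,y$ (and controlling the boundary case $\rho(x,y)$ comparable to $nh$) is where the real work lies. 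An alternative, perhaps more robust, route to the same estimate is a spectral one: show that the generator $\mathcal{L}$, acting on $L^2(\XXX, m)$, has a spectral gap — equivalently that $\|e^{t\mathcal{L}}\|$ restricted to suitable function classes decays like $e^{-\varkappa t}$ — using the $L^2$-spectral gap of the hyperbolic Laplacian (the bottom of its spectrum is $1/4 > 0$) together with a perturbation/comparison argument relating $\mathcal{L}$ to the Laplacian; then $P(x,D(y,h),t) = (e^{t\mathcal{L}}\mathbf{1}_{D(y,h)})(x)$ and one bounds this by Cauchy–Schwarz against the finite mass $m(D(y,h)) = V(h)$. Either way, the negative curvature is doing the essential work, and I would expect to write the probabilistic argument as the primary proof.
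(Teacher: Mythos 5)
Your overall architecture (Poissonization over the rate-$1$ jump clock, plus a uniform geometric bound on the $n$-step hitting probability $q_n(x,y)$ of $D(y,h)$, then resummation of the Poisson series) is a legitimate and genuinely different packaging from the paper's. The paper argues directly in continuous time by splitting into two cases at the spatial scale $\alpha t$: it cites Tutubalin's estimates to get linear escape, $P(x,D(x,\alpha t),t)\le C_1(h)e^{-\gamma t}$, which disposes of targets with $D(y,h)\subset D(x,\alpha t)$; and for $\rho(x,y)>\alpha t-h$ it bounds the visible angular size of $D(y,h)$ seen from $x$ by $\tilde C_2(h)e^{-\alpha t}$ and uses the isotropy \eqref{rho-lob} to convert angular measure into hitting probability. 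Your two ingredients are the same two geometric facts (positive radial drift, exponentially thin forward cone), reassembled per discrete step instead of per time $t$; the final summation is correct, and the spectral alternative you mention (operator norm of the isotropic Markov kernel on $L^2(\XXX,m)$ strictly below one, then Cauchy--Schwarz against $m(D(y,h))$) would also deliver $\sup_{x,y}q_n\le C_1\theta^n$ cleanly.

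The gap is in the justification of that central estimate. The one-step bound you propose, $g_r=\sup_{z:\,\rho(z,y)=r}\Pr_z\bigl(\rho(\hat X_1,y)\le r-h/2\bigr)\le 1-c$, does not imply positive drift of the radial process nor stochastic domination by a drifted walk: it only says that with probability at least $c$ the distance fails to drop by $h/2$, which is compatible with $\mathbb{E}[\Delta\rho]$ being as negative as $-(1-c)h$. What is actually needed (and is true on the hyperbolic plane for an isotropic kernel of range $h$) is a uniform lower bound $\mathbb{E}_z\bigl[\rho(\hat X_1,y)-\rho(z,y)\bigr]\ge\delta>0$ for $\rho(z,y)\ge r_0$, or better an exponential supermartingale estimate $\mathbb{E}_z\bigl[e^{-\beta\rho(\hat X_1,y)}\bigr]\le e^{-\beta\delta}e^{-\beta\rho(z,y)}$, together with separate control of excursions into $\{\rho(\cdot,y)<r_0\}$ where the drift estimate is unavailable; only then do Azuma/Chernoff-type bounds give $q_n\le C_1\theta^n$. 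A workable route to the drift is the hyperbolic law of cosines, which for an isotropic step of length $s$ from $z$ gives $\mathbb{E}_\theta\bigl[\cosh\rho(w,y)\bigr]=\cosh\rho(z,y)\cosh s$, but even this must be upgraded to a tail bound rather than a first-moment statement. So either strengthen the one-step claim to a genuine uniform drift with the boundary region handled, or switch entirely to the $L^2$/spectral-radius argument; as written, the passage from $g_r\le 1-c$ to geometric decay of $q_n$ does not go through.
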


\begin{proof}
Using the estimates from \cite{Tutu} (in the proof of \cite[Lemma 1]{Tutu}) we conclude that there exist $\alpha>0$ and $\gamma>0$ such that
\begin{equation}\label{Lob-1}
P(x, D(x, \alpha t), t) \le C_1(h) e^{- \gamma  t} \quad \mbox{for all } \; t \ge 0.
\end{equation}
Fix a large $t>0$. If $D(y,h) \subset D(x, \alpha t)$, then we get
\begin{equation}\label{Lob-2}
P(x, D(y, h), t) \le C_1(h) e^{- \gamma  t}.
\end{equation}
Let us consider the case when $\rho(x,y)> \alpha t -h $  for a given $t$, where $\alpha>0$ is the same constant as in \eqref{Lob-1}.
In this case, using that the length of the circle of a radius $r$ is exponentially large in $r$ we conclude that "the visible angular size from $x$", i.e. the
angle $\varphi(y,h)$ of a sector centered at $x$ and resting on disk $D(y,h)$ admits the following upper bound
$$
\varphi(y,h) \le \tilde C_2(h) e^{- \alpha t}.
$$
Then isotropy condition \eqref{rho-lob} implies that
\begin{equation}\label{Lob-3}
P(x, D(y, h), t) \le  C_2 (h)  e^{- \alpha t}\quad \mbox{ if  } \; \rho(x,y)> \alpha t -h.
\end{equation}
Taking
$$
\varkappa = \min \{ \alpha, \ \gamma \}, \quad C(h) = \max \{ C_1(h), \ C_2(h) \},
$$
we obtain desired estimate \eqref{Lob-lemma} from \eqref{Lob-2} - \eqref{Lob-3}.
\end{proof}
This Lemma immediately implies the convergence of the integral in  \eqref{suf-cond}, since
$$
 \sup\limits_{x,y} \mathbb{E}_{x} a(X(t), y) \le A  \sup\limits_{x,y} P(x, D(y,h), t) \le A  C(h) e^{-\varkappa \, t},
$$
where $A = \sup a(x,y)$.

The symmetric contact model on the hyperbolic space ($d \ge 3 $) can be considered in the similar way.
\medskip

3. {\bf The homogeneous contact model on the Cayley tree: $\XXX=T_k$.}\\
Consider the Cayley tree $T_k$, i.e. infinite regular tree with vertex degree $k\ge 3$. The continuous time symmetric random walk $X(t)$ on $T_k$ is given by the generator $L$ defined in \eqref{L}, where $a(x,y)= a(y,x) =\frac1k$ for $d(x,y)=1$ and $a(x,y) =0$ otherwise. Here $d(x,y)$ is the distance on $T_k$, which is the same as the distance between two vertices in a graph. The measure $m$ on $T_k$ is the counting measure.

It is easy to see that for the trajectory of random walk $X(t)$ starting at $X(0) =x$ and any vertex $y \in T_k$ the distance $d_{x,y} (t) = d(X(t),y)$ is a random walk $Z_{x,y} (t)$ on $\mathbb{Z}_+ = \{ 0, 1, 2,\ldots \}$ with rates
$r(0,1)=1$ and $r(n,n+1) = \frac{k-1}{k}, \ r(n,n-1) = \frac1k, \ n=1,2, \ldots$. Thus, the random walk $Z_{x,y}(t)$ for any $x$ and $y$ has a positive drift, and the following lemma holds.
\begin{lemma}\label{Lemma-tree}
There exist $\varkappa>0$ and $C$ such that the transition probability $P(x, y, t) = \Pr ( X(t)=y | \, X(0) = x)$ meets the following estimate:
\begin{equation}\label{tree-lemma}
P(x, y, t) \le C e^{-\varkappa \, t} \quad \mbox{for all } \; x, \ y \in T_k.
\end{equation}
\end{lemma}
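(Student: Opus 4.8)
The plan is to reduce this two-point estimate to a one-dimensional one and then run an exponential supermartingale argument. As observed just before the lemma, for a fixed vertex $y\in T_k$ the distance $d_{x,y}(t)=d(X(t),y)$ is a birth-and-death chain $Z(t)$ on $\mathbb{Z}_+$ started at $d(x,y)$, with up-rate $\tfrac{k-1}{k}$ and down-rate $\tfrac1k$ from every state $n\ge1$, and rate $1$ from $0$ to $1$; its law depends on $x,y$ only through $n:=d(x,y)$. Since $\{X(t)=y\}=\{Z(t)=0\}$, we have $P(x,y,t)=\Pr_{n}(Z(t)=0)$, so it suffices to prove $\sup_{n\ge0}\Pr_{n}(Z(t)=0)\le Ce^{-\varkappa t}$ for suitable $C,\varkappa>0$; note this is automatically uniform in $x,y$.

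Next I would test the generator $\mathcal G$ of $Z$ against $f(j)=\theta^{j}$ for a carefully chosen $\theta\in(0,1)$. One computes $(\mathcal Gf)(n)=\theta^{n}\big(\tfrac{k-1}{k}\theta+\tfrac1k\theta^{-1}-1\big)$ for $n\ge1$ and $(\mathcal Gf)(0)=\theta-1$. The function $g(\theta)=\tfrac{k-1}{k}\theta+\tfrac1k\theta^{-1}$ satisfies $g(1)=1$ with $g'(1)=\tfrac{k-2}{k}>0$ (this is exactly where $k\ge3$, i.e. the strictly positive drift of $Z$, enters), so one can choose $\theta\in(0,1)$ — for instance $\theta=1/\sqrt{k-1}$, for which $g(\theta)=\tfrac{2\sqrt{k-1}}{k}<1$ since $(k-2)^2>0$ — with $g(\theta)<1$. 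For such $\theta$ both $(\mathcal Gf)(0)=-(1-\theta)f(0)$ and $(\mathcal Gf)(n)=-(1-g(\theta))f(n)$, $n\ge1$, hence $\mathcal Gf\le-\varkappa f$ on all of $\mathbb{Z}_+$ with $\varkappa:=\min\{1-\theta,\,1-g(\theta)\}>0$.

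Finally, since $f$ is bounded ($0<f\le1$) and the total jump rate of $Z$ is bounded by $1$, Dynkin's formula applies with no integrability or explosion issue and shows that $M(t):=e^{\varkappa t}f(Z(t))$ is a supermartingale; hence $\mathbb{E}_n[\theta^{Z(t)}]\le\theta^{n}e^{-\varkappa t}\le e^{-\varkappa t}$. Because $\mathbf 1_{\{j=0\}}\le\theta^{j}$ for every $j\ge0$, this yields $\Pr_n(Z(t)=0)\le\mathbb{E}_n[\theta^{Z(t)}]\le e^{-\varkappa t}$, uniformly in $n\ge0$, and therefore $P(x,y,t)\le e^{-\varkappa t}$ for all $x,y\in T_k$, i.e. the claim with $C=1$. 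The crux — though not a deep one — is verifying the drift inequality $\mathcal Gf\le-\varkappa f$ at the reflecting state $0$ as well as at the bulk states, which is what forces the choice $\theta<1$ and uses $k\ge3$; once that is in place, the supermartingale delivers the bound uniformly in the starting point in one stroke. (Alternatively, stochastic monotonicity of birth-and-death chains reduces $\sup_n$ to the case $n=0$, after which the generating function $\big(1-\mathbb{E}_0 s^{T_0}\big)^{-1}$ of the return probabilities, being analytic on a disc of radius $>1$, gives the same exponential decay; but the supermartingale route is shorter and self-contained.)
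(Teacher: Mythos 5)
Your proof is correct, but it takes a genuinely different route from the paper's. The paper argues exactly as in the Lobachevsky case (Lemma 4.1): it splits into (i) a large--deviation bound $\Pr(Z_{x,x}(t)<\alpha t)\le C_1e^{-\gamma t}$ coming from the positive drift of the radial walk, and (ii) for the remaining vertices with $d(x,y)>\alpha t$, the bound $P(x,y,t)\le C_2e^{-\alpha t}$ obtained from isotropy together with the exponential growth of spheres in $T_k$ (all points of a sphere are equiprobable, and the sphere of radius $n$ has exponentially many points). You instead collapse everything into a single Lyapunov/exponential-supermartingale estimate on the radial birth-and-death chain: with $f(j)=\theta^{j}$, $\theta=1/\sqrt{k-1}$, you verify $\mathcal Gf\le-\varkappa f$ both in the bulk and at the reflecting state $0$ (your computations $g(1)=1$, $g'(1)=\tfrac{k-2}{k}>0$, $g(1/\sqrt{k-1})=\tfrac{2\sqrt{k-1}}{k}<1$ are all correct, and the identification $\{X(t)=y\}=\{Z(t)=0\}$ with the law of $Z$ depending on $(x,y)$ only through $d(x,y)$ is exactly the reduction the paper sets up just before the lemma). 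What your version buys is a cleaner, fully self-contained argument with explicit constants — indeed the stronger bound $P(x,y,t)\le\theta^{\,d(x,y)}e^{-\varkappa t}$, so $C=1$ — and it sidesteps the somewhat informal volume-growth step of the paper; what the paper's version buys is uniformity of method with the hyperbolic example, where no exact radial Markov chain is available and the two-estimate decomposition is the natural substitute.
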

\begin{proof}
The proof of this lemma is completely analogous to the proof of Lemma \ref{Lemma-Lob}. As above, it is a  combination of two estimates. The first bound follows from the fact that the random walk $Z_{x,y}(t)$ for any $x$ and $y$ has a positive drift. Namely, there exist $\alpha>0$ and $\gamma>0$ such that
\begin{equation}\label{T-1}
P(x, D(x, \alpha t), t) = \Pr (Z_{x,x}(t) < \alpha t ) \le C_1 e^{- \gamma  t} \quad \mbox{for all } \; t \ge 0,
\end{equation}
where $D(x,R)$ is the ball centered at $x$ of radius $R$.
The second estimate is valid for $y \in T_k$ with  $d(x,y)> \alpha t$:
\begin{equation}\label{T-3}
P(x, y, t) \le  C_2  e^{- \alpha t} \quad \mbox{for some } \; \alpha>0.
\end{equation}
Estimate \eqref{T-3} follows from the observation that the number of $y \in T_k$ such that $d(x,y) =n$ is exponential in $n$. Finally, \eqref{tree-lemma} follows from \eqref{T-1} - \eqref{T-3}.
\end{proof}
Thus, as above we conclude that all required conditions \eqref{2a} and \eqref{2b} on $a(x,y)$ are fulfilled.

Let us note that the analogous result holds for any tree $T$ with vertex degree $k_i \ge 3, \ i \in T$.
\medskip

4. {\bf Inhomogeneous contact models generated by inhomogeneous random walks on a lattice: $\XXX=\Z^d$.}
Let us consider a random walk on $\Z^d$ with transition probabilities $P(x,y) = \Pr (x \to y)$ that differ from those of the homogeneous symmetric walk $\pi(y-x) = \pi(x-y)$ only locally, i.e. in a finite neighborhood of the origin:
\begin{equation}\label{inhomo}
P (x, y) = \pi (y-x) + V(y-x, x),\qquad \sum_{y} P (x, y) = 1 \;\; \forall \; x \in \Z^d,
\end{equation}
with
$$
V(u,x)=0 \; \mbox{ if } \; \max\{|u|, \ |x|  \} > R
$$
for some $R>0$. Moreover, assume that the perturbed random walk is irreducible in $\Z^d$.

In the paper \cite{MZh}, the main term of the asymptotics of the probability ${\Pr} (X(t) = y |\, X(0) = x)$ as $t \to \infty $ has been found. It turns out that, for $d \ge 2$, this main term of the asymptotics differs from the corresponding term of the asymptotics for the homogeneous  symmetric walk (which has a usual Gaussian form) by a quantity of the order $O(t^{- d/2} (|y| + 1)^{ - \frac{(d - 1)}{2}} )$. Thus the correction to the Gaussian term is comparable with it only in a finite neighborhood of the origin, and consequently the following uniform in $x$  and  $y$ estimate holds:
\begin{equation}\label{nhrw}
p(x,y,t) := \Pr (X(t) = y|\, X(0) = x) \le \frac{C}{t^{d/2}}
\end{equation}
with a constant $C$ depending on probabilities \eqref{inhomo} and $d$.

Taking into account critical regime condition  \eqref{2a} one can conclude that estimate \eqref{nhrw} also holds for the continuous time random walk with jump intensities $a(x,y) = P (x, y)$.  Thus, estimate \eqref{nhrw} implies that in the case $d \ge 3$ the transience condition  \eqref{2b} is fulfilled. Consequently, all statements of Theorem \ref{mainth} are valid for the inhomogeneous contact process on $\Z^d$ with the birth rates given by $a(x,y) = P(x,y)$.
\medskip

5. {\bf  Contact models on $\XXX=\Z^d$ generated by random conductance models.}
Let $\mathbb{E}_d$ be the set of non-oriented nearest neighbour bonds of the lattice $\Z^d$:
$$
\mathbb{E}_d = \{ (x,y) \in \Z^d \times \Z^d, \ x \sim y\}, \quad x \sim y \; \mbox{means} \; x,y \; \mbox{ are neighbors},
$$
and $\mu_e, \ e \in \mathbb{E}_d,$ are taken as nonnegative i.i.d.r.v defined on a probability space $(\Omega, \mathbb{P})$. Moreover, assume that
\begin{equation}\label{ellip}
 c^{-1} \le \mu_e \le c \quad \mbox{for some } \;  c \ge 1.
\end{equation}
Thus, $\mu_{xy} = \mu_{yx}, \ x\sim y$, are i.i.d. random variables satisfying \eqref{ellip}, and $\mu_{xy} = 0$ if $x \not \sim y$. Set
$$
\mu_x = \sum_y \mu_{xy}, \quad a(x,y) = \frac{\mu_{xy}}{\mu_x},
$$
and consider a continuous time random walk on $\Z^d$ with transition rates $a(x,y)$. The generator of this random walk is given by
$$
{\mathcal{L}}_C f(x) = \mu_x^{-1} \sum_y \mu_{xy} (f(y) - f(x)).
$$
In this case, see e.g. \cite{BD}, \cite{DD}, the following upper bound holds $\mathbb{P}$-a.s.
$$
\sup_{x,y} \, p^\omega(x,y,t)  := \sup_{x,y}\, {\Pr}^\omega (X(t) = y | \, X(0) = x) \le \frac{C}{t^{d/2}},
$$
where constant $C$ does not depend on $\omega$.

Moreover, the same result holds for the simple random walk on the infinite Bernoulli (bond) percolation cluster in $\Z^d$, see e.g. \cite{BH, MR}.

Thus, in both models the transience condition \eqref{suf-cond} (and  \eqref{2b}) guaranteeing the existence of the invariant measure of the corresponding contact model is fulfilled  in the case $d \ge 3$.

\section{The proof of Theorem \ref{mainth}}


In the proof of the first part of Theorem \ref{mainth} we use the
induction in $n\in\N$.
For $n=1$ in (\ref{Last}) we have
\begin{equation}\label{8}
-k^{(1)}(x) + \int\limits_{\XXX} a(x,y) k^{(1)}(y) m(dy) = 0.
\end{equation}
It follows immediately that $k^{(1)}(x) \equiv \varrho $ is an element of $\XX_{1}$ and it solves \eqref{8}.
We notice that $\varrho$ can be interpreted as the spatial density of particles.

Now let us turn to the general case. If for any $n>1$ we succeed to
solve equation (\ref{Last}) and express $k^{(n)}$ through
$f^{(n)}$, then knowing the expression of $f^{(n)}$ through
$k^{(n-1)}$ (see (\ref{f})), we get the solution $\{k^{(n)}\}_{n\geq 1}$ to the full system
(\ref{Last}) recurrently.
%

\begin{lemma} \label{3.2}
The operator $e^{t \hat L_n^{\ast}}$, where $\hat L_n^{\ast}$ was defined in \eqref{korf}, is
positive, i.e. it maps non-negative functions to non-negative functions.
\end{lemma}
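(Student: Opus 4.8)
The plan is to separate $\hat L_n^{\ast}$ into its ``death'' part and its ``gain'' part and to use that the gain part is a bounded \emph{positive} operator on $\XX_{n}$. Concretely, write $\hat L_n^{\ast} = -nI + A_n$, where, according to \eqref{korf},
$$
(A_n k)(x_1,\ldots,x_n) \ = \ \sum_{i=1}^n \int\limits_{\XXX} a(x_i,y)\, k(x_1,\ldots,x_{i-1},y,x_{i+1},\ldots,x_n)\, m(dy).
$$
Since $a\ge 0$ and $m$ is a nonnegative measure, $k\ge 0$ implies $A_n k\ge 0$; moreover, by the critical regime condition \eqref{2a} one has $\|A_n k\|_{\XX_{n}}\le n\|k\|_{\XX_{n}}$, so $A_n$ is a bounded positive linear operator on $\XX_{n}$ with $\|A_n\|\le n$.

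Next, $-nI$ and $A_n$ commute, hence $e^{t\hat L_n^{\ast}} = e^{-nt}\,e^{tA_n}$ for every $t\ge 0$. The series $e^{tA_n}=\sum_{k\ge 0}\frac{t^k}{k!}A_n^k$ converges in operator norm since $A_n$ is bounded. Each power $A_n^k$ is positive, being a composition of positive operators, and each coefficient $t^k/k!$ is nonnegative, so every partial sum $\sum_{k=0}^m \frac{t^k}{k!}A_n^k$ maps nonnegative functions to nonnegative functions. The cone of nonnegative functions in $\XX_{n}$ is closed with respect to the sup-norm, therefore the norm-limit $e^{tA_n}$ is positive as well. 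Since $e^{-nt}>0$, the operator $e^{t\hat L_n^{\ast}}=e^{-nt}e^{tA_n}$ is positive, which is the assertion of the lemma.

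Equivalently, one may invoke the probabilistic representation $(e^{tA_n}k)(x_1,\ldots,x_n)=\mathbb{E}_{x_1,\ldots,x_n}\big[k(\xi_1(t),\ldots,\xi_n(t))\big]$, where $\xi_1,\ldots,\xi_n$ are independent pure-jump processes on $\XXX$ with jump kernels $a(x_i,\cdot)\,m(\cdot)$, from which positivity of $e^{tA_n}$ (and hence of $e^{t\hat L_n^{\ast}}$) is immediate. In either form there is essentially no obstacle; the only point deserving a word of care is that positivity has to be passed through the norm-limit defining the exponential, which is legitimate because the positive cone of $\XX_{n}$ is norm-closed.
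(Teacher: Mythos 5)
Your proof is correct and follows essentially the same route as the paper: both split $\hat L_n^{\ast}$ into the scalar part $-nI$ and the positive bounded gain operator(s) $A^i$, and deduce positivity of the exponential from the power series (the paper merely writes the factorization coordinate-wise as $\otimes_{i=1}^n e^{-t}e^{tA^i}$ rather than exponentiating the sum $A_n=\sum_i A^i$ at once). Your added remarks on the norm-closedness of the positive cone and the probabilistic representation are sound but not needed beyond what the paper records.
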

\begin{proof}
The operator
$$
A^i k^{(n)}(x_1, \ldots, x_n) \ := \ \int\limits_{\XXX} a(x_i, y) k^{(n)} (x_1, \ldots, x_{i-1}, y,
x_{i+1}, \ldots, x_n) m(dy).
$$
is positive and bounded on $\XX_{n}$ for any $1\leq i\leq n$. Taking into account
\begin{equation}\label{20L}
e^{t \hat L_n^{\ast}} \ = \ \otimes_{i=1}^n \ e^{-t} e^{t A^{i}},
\end{equation}
we get the desired conclusion.
\end{proof}

Next we will construct a solution to the system (\ref{Last}) satisfying \eqref{estimate}.
As follows from (\ref{f}), the function $f^{(n)}$ is the sum of
functions of the form
\begin{equation}\label{32}
f_{i,j} (x_1, \ldots, x_n)  =  k^{(n-1)} (x_1,\ldots,\check{x_i},
\ldots, x_n)  a(x_i, x_j), \quad i\neq j.
\end{equation}

We suppose by induction that
$$
k^{(n-1)} (x_1, \ldots, x_{n-1}) \ \le \ K_{n-1}, \quad \text{for all } \; (x_1, \ldots, x_{n-1})\in {\XXX}^{n-1},\quad n\geq 2,
$$
where $K_n = D C^n (n!)^2$, and $D, C$ are some constants. Consequently,
\begin{equation}\label{34}
f_{i,j}(x_1, \ldots, x_n) \ \le \  K_{n-1} a(x_i, x_j),\quad (x_1, \ldots, x_{n})\in {\XXX}^{n}.
\end{equation}
Using the positivity of the operator $e^{t \hat L_n^{\ast}}$ and (\ref{34}) we have
\begin{align}
\begin{aligned}\label{36}
\left(e^{t \hat L_n^{\ast}} f_{i,j} \right) (x_1, \ldots, x_{n})\ \le \  K_{n-1} \ \left(e^{t \hat L_n^{\ast}}
a(\cdot_i, \cdot_j) \right)(x_1, \ldots, x_{n}).
\end{aligned}
\end{align}
Set
\begin{align}
\begin{aligned}\label{21}
{\mathcal{L}}^{i} k^{(n)}(x_1, \ldots, x_n) \ = &\int\limits_{\XXX} a(x_i, y) k^{(n)}(x_1, \ldots, x_{i-1}, y, x_{i+1}, \ldots,
x_n) m( dy)\\
& - k^{(n)}(x_1, \ldots, x_n).
\end{aligned}
\end{align}
An easy observation $e^{t {\mathcal{L}}^{i}}1\!\!1=1\!\!1$, $\forall i=1,\,\ldots, n,$ where
 $1\!\!1(x)\equiv 1$, shows
\begin{equation}\label{36_1}
\left(e^{t \hat L_n^{\ast}}
a(\cdot_i, \cdot_j)\right) (x_1, \ldots, x_{n})\  =  \
\left(e^{t ( {\mathcal{L}}^i + {\mathcal{L}}^j)} a(\cdot_i, \cdot_j)\right) (x_1, \ldots, x_{n}).
\end{equation}
Note that the latter function depends only on variables $x_{i}$ and $x_{j}$.
\medskip

Notice that $e^{t \hat L_n^{\ast}} f_{i,j}$ is integrable with respect to $t$ on $\R_{+}$. It immediately follows from  \eqref{20L}, \eqref{32} and condition \eqref{2b}:
\begin{equation}\label{vij}
v^{(n)}_{i,j} \ = \ \int_0^{\infty} e^{t \hat L_n^{\ast}} f_{i,j} \ dt \
\le  K_{n-1} \, Q,
\end{equation}
where $Q$ is the same constant as in \eqref{2b}.
We used here the identity
\begin{equation}\label{2BB}
 e^{t \hat L_n^{\ast}} a(x,y)   =   \mathbb{E}_{x,y} a(X(t), Y(t)).
\end{equation}

Our next goal is to show that
\begin{equation}\label{k-def}
v^{(n)}  = \sum_{i \neq j} v^{(n)}_{i,j}=\int_0^{\infty} e^{t \hat L_n^{\ast}} f^{(n)} dt
\end{equation}
is a solution to \eqref{Last} in $\XX_{n}$.
It is easily seen from \eqref{vij} and induction procedure that $v^{(n)}\in\XX_{n}$. Since $e^{t \hat L_n^{\ast}}$ is a strongly continuous semigroup we have
\begin{equation}\label{37A}
e^{t \hat L_n^{\ast}}f^{(n)}-f^{(n)}=\hat L_n^{\ast}\int_{0}^{t}e^{s \hat L_n^{\ast}}f^{(n)}ds.
\end{equation}
Rewrite \eqref{37A} as
\begin{equation}\label{37A-bis}
e^{t \hat L_n^{\ast}}f^{(n)} = f^{(n)}+ \hat L_n^{\ast}\int_{0}^{t}e^{s \hat L_n^{\ast}}f^{(n)}ds.
\end{equation}
Then using condition \eqref{2b}, inequality \eqref{34}, Lemma \ref{3.2} and the fact that $\hat L_n^{\ast}$ is a bounded operator we conclude that the right hand side of \eqref{37A-bis} has a uniform in $x_1, \ldots, x_n$ limit as $t \to \infty$, therefore, the left hand side of \eqref{37A-bis}, i.e. $ e^{t \hat L_n^{\ast}}f^{(n)}$, also converges in $\XX_{n}$. Moreover the limit is a nonnegative function in  $\XX_{n}$. However, if this function is somewhere strictly positive, then we get a contradiction with condition \eqref{2b}. Thus, we conclude that the following limit holds in $\XX_{n}$:
\begin{equation}\label{2c-bis}
 e^{t \hat L_n^{\ast}}f^{(n)} \to 0, \quad t\to \infty.
\end{equation}
A passage to the limit in \eqref{37A}  as $t\to\infty$ together with \eqref{2c-bis} shows  that $v^{(n)}$ defined in \eqref{k-def} is a solution to \eqref{Last} in $\XX_{n}$.

Since the function $f^{(n)}$ is the sum
of functions $f_{i,j}$, $i\neq j$ we deduce from \eqref{vij} that $ v^{(n)}$
is bounded by $n^2  K_{n-1} Q$. Thus we get the recurrence inequality
\begin{equation}\label{50}
K_n \ \le \ n^2 K_{n-1} Q,
\end{equation}
and by induction it follows that
\begin{equation}\label{49}
K_n \ \le \ Q^n \, (n!)^2.
\end{equation}
Thus
\begin{equation}\label{49A}
v^{(n)} (x_1, \ldots, x_n) \ \le \ Q^n \, (n!)^2.
\end{equation}

Thus, we have constructed $\{v^{(n)}\}_{n\geq 1}$ satisfying estimate (\ref{49A}).
Of course, any functions of the form
$$
k^{(1)}\equiv \varrho,\quad k^{(n)}  = v^{(n)} + A_n =  \int\limits_0^{\infty} e^{t \hat
L_n^{\ast}} f^{(n)} \ dt  + A_n,\quad n \geq 2,
$$
where $A_n$ are arbitrary constants, are solution to the system
(\ref{Last}) too.  Taking $A_n=\varrho^n$ we conclude that
\begin{equation}\label{52}
k^{(1)}_\varrho\equiv\varrho,\quad k^{(n)}_\varrho = v^{(n)} + \varrho^n = \int\limits_0^{\infty} e^{t \hat L_n^{\ast}} f^{(n)}dt  +  \varrho^n,\quad n\geq 2,
\end{equation}
is the desired solution to \eqref{Last} in the Banach spaces $(\XX_{n})_{n\geq 1}$. To emphasize the dependence of $f^{(n)}$ on $\varrho$, we will use notation $f_{\varrho}^{(n)}$  for $f^{(n)}$.
For the solutions $\{ k_{\varrho}^{(n)} \}_{n\geq 1}$ of \eqref{52} instead of (\ref{50}) we have the recurrence
\begin{equation}\label{53}
K_n \ \le \  n^2 K_{n-1} Q \ + \ \varrho^n,
\end{equation}
which yields
\begin{equation}\label{55}
K_n \ \le \ D Q^n (n!)^2.
\end{equation}

To be certain that the constructed system $\{ k_{\varrho}^{(n)} \}_{n\geq 1}$ is a system of correlation functions, i.e., it corresponds to a probability measure $\mu^\varrho$ on the configuration space $\Gamma$, we will
prove below that $\{ k_{\varrho}^{(n)} \}_{n\geq 1}$ can be constructed as the limit when $t \to \infty$ of the system
of correlation functions $\{ k_{t}^{(n)}\}_{n\geq 1}$ associated with the solution to the Cauchy problem \eqref{59} with
the initial data \eqref{k0}.


By the variation of parameters formula we have
\begin{equation}\label{61}
k_{t}^{(n)} \ = \ e^{t \hat L_n^{\ast}} k_{0}^{(n)} \ + \  \int\limits_0^t e^{(t-s) \hat
L_n^{\ast}} f_s^{(n)} \ ds,
\end{equation}
where $f_s^{(n)}$ is expressed through $k_s^{(n-1)}$ by
(\ref{f}). On the other hand, we proved above the existence of the solution $\{ k_{\varrho}^{(n)} \}_{n\geq 1}$ of the stationary problem:
$$ \hat L_n^{\ast}
k_\varrho^{(n)} \ = \ - f_\varrho^{(n)},$$
where
$$
f_\varrho^{(n)}(x_1, \ldots, x_n) \ = \ \sum_{i,j:\ i\neq j}
k_\varrho^{(n-1)}(x_1, \ldots,\check{x_i}, \ldots, x_n) \ a(x_i, x_j).
$$
This solution meets the following equation
$$
\left( e^{t \hat L_n^{\ast}} - E \right) k_\varrho^{(n)} \ = \ -
\int\limits_0^t \frac{d}{ds} e^{(t-s) \hat L_n^{\ast}} k_\varrho^{(n)} ds \
\ = \ - \int\limits_0^t e^{(t-s) \hat L_n^{\ast}}  f_\varrho^{(n)} \ ds,
$$
and therefore
\begin{equation}\label{64}
k_{t}^{(n)} -  k_\varrho^{(n)} \ = \
e^{t \hat L_n^{\ast}}(k_{0}^{(n)} - k_\varrho^{(n)}) \ + \  \int\limits_0^t
e^{(t-s) \hat L_n^{\ast}} (f_{s}^{(n)} -  f_\varrho^{(n)}) \ ds.
\end{equation}
We will prove now that both terms in the right-hand side of (\ref{64}) converge to 0 in
the norm of $\XX_n$ as $t \to \infty$.

Formula (\ref{52}) yields
\begin{equation}\label{65}
e^{t \hat L_n^{\ast}} \big( k_{0}^{(n)} - k_\varrho^{(n)} \big) \ = \ - e^{t \hat
L_n^{\ast}} v^{(n)},
\end{equation}
where
\begin{equation}\label{66}
v^{(n)} \ = \ \int_0^{\infty} e^{s \hat L_n^{\ast}} f_\varrho^{(n)}
\ ds.
\end{equation}
Consequently, the first term in the r.h.s. of  \eqref{64} can be rewritten using \eqref{65} as follows
\begin{equation*}\label{1termA}
e^{t \hat L_n^{\ast}} \ v^{(n)} = \int_{0}^{\infty}e^{(t+s) \hat L_n^{\ast}}f_\varrho^{(n)} \, ds =  \int_{t}^{\infty}e^{r \hat L_n^{\ast}}f_\varrho^{(n)} \, dr.
\end{equation*}
Due to the uniform convergence of the integral in \eqref{2b} we conclude that
\begin{equation}\label{1term}
||e^{t \hat L_n^{\ast}} \ v^{(n)}||_{{\XX}_n}\to 0,\quad t\to\infty.
\end{equation}

The second term in the r.h.s. of   \eqref{64} can be estimated in the same way as in our previous works \cite{KKPZ, KPZh}.

Thus we proved the strong convergence (\ref{Th1-2}), and the proof of the second part of Theorem \ref{mainth} is completed.
\medskip

The final step of the proof is to show that the system of correlation functions $\{k_\varrho^{(n)} \}$ corresponds to a probability measure $\mu^\varrho$ on the configuration space $\Gamma$. For this  we have constructed above $\{k_\varrho^{(n)} \}$ as the limit when $t \to \infty$ of the solution $\{ k_{t}^{(n)} \}$
of the Cauchy problem (\ref{59}) with initial data (\ref{k0})
\begin{equation}\label{limk}
k^{(n)}_\varrho \ = \ \lim_{t\to\infty} k_{t}^{(n)}.
\end{equation}
We will use next the following Proposition summarizing results of two papers \cite{L1} and \cite{L2} of A. Lenard.

\begin{proposition}(see \cite{L1}, \cite{L2}) \label{propos2} If the system of correlation functions $\{k^{(n)} \}$ satisfies Lenard positivity and moment growth conditions then there exists a unique probability measure $\mu \in {\cal M}^1_{fm}(\Gamma)$, locally absolutely continuous with respect to a Poisson measure, whose correlation functions are exactly $\{k^{(n)} \}$.
\end{proposition}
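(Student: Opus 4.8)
The plan is to deduce this from the reconstruction theorem of Lenard \cite{L1,L2}; I indicate the structure of the argument. Write $\Ga_0$ for the space of finite configurations in $\XXX$, $\la$ for the Lebesgue--Poisson measure associated with $m$, and, for a bounded measurable $G:\Ga_0\to\R$ with bounded support, let $(KG)(\ga)=\sum_{\eta\Subset\ga}G(\eta)$ be its $K$-transform, a cylinder function on $\Ga$. The Lenard positivity condition (\ref{LP}) states exactly that $KG\ge 0$ on $\Ga$ forces $\int_{\Ga_0}G(\eta)\,k(\eta)\,\la(d\eta)\ge 0$, i.e. that the linear functional $\Phi(KG):=\int_{\Ga_0}G\,k\,d\la$ is well defined on the range of $K$ (this uses the elementary injectivity of $K$ on functions with bounded support) and positive on the cone of those $G$ with $KG\ge 0$.

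First I would localize. Fix $\La\in\B_{\mathrm{b}}(\XXX)$ and restrict all data to $\La$: the space $\Ga(\La)=\bigsqcup_{n\ge 0}\La^{n}/S_{n}$ is locally compact, separable and metrizable (because $\La$ is compact and $m(\La)<\infty$), and the functions $k^{(n)}|_{\La^{n}}$ are candidate factorial-moment densities of a finite point process on $\La$. Specializing $\Phi$ to cylinder functions supported in $\Ga(\La)$ gives a positive functional that, by the Haviland/Riesz-type representation used in \cite{L1} (equivalently, by solvability of the associated multivariate Stieltjes moment problem for the occupation numbers of finite partitions of $\La$), is represented by a measure $\mu_{\La}$ on $\Ga(\La)$; the normalization $k^{(0)}=1$ together with the moment bound makes $\mu_{\La}$ a probability measure whose correlation functions are exactly $k^{(n)}|_{\La^{n}}$.

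Uniqueness of $\mu_{\La}$ is the delicate point and is where the moment growth condition is used. A probability measure on $\Ga(\La)$ is determined by the joint law of the occupation numbers $(|\ga\cap\La_{1}|,\dots,|\ga\cap\La_{r}|)$ over all finite measurable partitions $\La=\La_{1}\sqcup\dots\sqcup\La_{r}$, and the joint moments of these numbers are finite linear combinations of integrals of the $k^{(n)}$ over measurable subsets of $\La^{n}$. The assumed growth of the correlation functions (in the situation of Theorem \ref{mainth}, the bound $k^{(n)}\le D\,Q^{n}(n!)^{2}$) translates, after converting factorial moments into ordinary moments by Stirling- and Fubini-number estimates, into a bound $M_{n}\le C_{\La}^{n}\,(n!)^{2}$ for the moments $M_{n}$ of each occupation number; hence $M_{n}^{-1/(2n)}\ge c/n$ for large $n$, so $\sum_{n}M_{n}^{-1/(2n)}=\infty$, Carleman's criterion makes each one-dimensional Stieltjes moment problem determinate, and the multivariate version of Carleman's criterion upgrades this to determinacy of the joint moment problem. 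Thus $\mu_{\La}$ is the only probability measure on $\Ga(\La)$ with the prescribed correlation functions.

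It remains to glue the local pieces. For $\La'\subset\La$ the image of $\mu_{\La}$ under $\ga\mapsto\ga\cap\La'$ has the same correlation functions as $\mu_{\La'}$, hence equals $\mu_{\La'}$ by the uniqueness just proved; so $\{\mu_{\La}\}_{\La\in\B_{\mathrm{b}}(\XXX)}$ is a consistent family and the projective-limit (Kolmogorov/Prokhorov) theorem on the standard Borel space $\Ga$ produces a unique probability measure $\mu$ with $\mu|_{\Ga(\La)}=\mu_{\La}$ for every $\La$; its correlation functions are $\{k^{(n)}\}$, and the moment bound gives $\mu\in{\cal M}^{1}_{fm}(\Ga)$. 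Local absolute continuity with respect to the Poisson measure $\pi_{m}$ with intensity $m$ follows from the existence (local finiteness) of the correlation functions, i.e. from the absolute continuity of each local correlation measure with respect to $\la$, by the standard sector-by-sector inclusion--exclusion computation. The main obstacle is the representation step of \cite{L1} --- building a genuine point-process measure from the positive functional $\Phi$ on an infinite-dimensional cone, which requires a careful projective-limit and moment-problem analysis --- whereas the moment growth condition serves precisely to force determinacy of the local moment problems, hence uniqueness, in \cite{L2}. In the situation of this paper both hypotheses hold: Lenard positivity passes to the pointwise limit from the genuine correlation functions $k_{t}^{(n)}$, and the bound $D\,Q^{n}(n!)^{2}$ keeps Carleman's condition in force.
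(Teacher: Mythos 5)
The paper does not actually prove Proposition \ref{propos2}: it is quoted as a black box from Lenard's papers \cite{L1}, \cite{L2}, and the only ``proof'' content in the paper is the verification, elsewhere in Section 5, that the two hypotheses hold for the specific family $\{k^{(n)}_\varrho\}$ (positivity by passage to the limit from the $k_t^{(n)}$, and moment growth from the bound \eqref{55}). So there is no in-paper argument to compare yours against line by line. That said, your sketch is a faithful reconstruction of the standard two-part strategy behind the citation: existence of local measures $\mu_\Lambda$ from the positive functional $G\mapsto\langle G,k\rangle$ on the range of the $K$-transform, uniqueness from the moment growth condition via Carleman-type determinacy of the occupation-number moment problems, consistency and a projective limit to glue, and local absolute continuity from the existence of densities of the correlation measure with respect to the Lebesgue--Poisson measure. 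Your quantitative step is also right: $k^{(n)}\le DQ^n(n!)^2$ gives $m_n^\Lambda\le D(Qm(\Lambda))^n\,n!$, hence $(m_n^\Lambda)^{-1/n}\gtrsim 1/n$ and \eqref{MG} holds, and the corresponding ordinary moments of $|\gamma\cap\Lambda|$ still grow no faster than $C^n(n!)^2$, so Carleman's condition survives the factorial-to-ordinary moment conversion.

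Two caveats, both of which are exactly the content delegated to \cite{L1,L2} and which your sketch gestures at but does not carry out. First, the representation step is more than a Riesz/Haviland argument on a cone: one must show that the resulting local measures are supported on genuine configurations (occupation numbers taking values in $\mathbb{Z}_{\ge 0}$, additive over disjoint sets), which is precisely what Lenard's positivity condition is engineered to deliver in \cite{L2}; asserting ``solvability of the associated multivariate Stieltjes moment problem'' does not by itself give integer-valued, consistent occupation numbers. Second, passing from determinacy of the one-dimensional marginals to determinacy of the joint law of $(|\gamma\cap\Lambda_1|,\dots,|\gamma\cap\Lambda_r|)$ needs a genuine multivariate determinacy theorem (Petersen-type), or Lenard's own quasi-analyticity argument in \cite{L1}; it is not automatic. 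Also a minor attribution slip: existence is the content of \cite{L2}, uniqueness of \cite{L1}, not the other way around. None of this is a mathematical error in your outline --- it is the expected division of labour between a sketch and the cited theorems --- but if you intend your text to replace the citation rather than annotate it, these are the two places where real work remains.
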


For the convenience of the reader we formulate these conditions below.

\noindent {\it Lenard positivity.} $KG \ge 0$ for any $G \in B_{bs}(\Gamma_0)$ implies
\begin{equation}\label{LP}
\sum_{n=0}^{\infty} \frac{1}{n!} \int\limits_{\XXX} \ldots \int\limits_{\XXX} G^{(n)}(x_1, \ldots, x_n) k^{(n)}(x_1, \ldots, x_n) m(dx_1) \ldots m(dx_n) \ge 0.
\end{equation}
\medskip

\noindent {\it Moment growth.} For any bounded set $\Lambda \subset \XXX$ and $j \ge 0$
\begin{equation}\label{MG}
\sum_{n=0}^{\infty} (m^\Lambda_{n+j})^{- \frac{1}{n}} \ = \ \infty,
\end{equation}
where
$$
m^\Lambda_{n} = (n!)^{-1} \int\limits_\Lambda \ldots \int\limits_{\Lambda} k^{(n)}(x_1, \ldots, x_n) m(dx_1) \ldots m(dx_n).
$$
\medskip

In our case the inequality $\big( m_n^\Lambda \big)^{-\frac1n} \ge \frac{\tilde C}{n}$ follows from bound (\ref{55}). Thus condition (\ref{MG}) of the uniqueness holds.
To obtain the Lenard positivity condition (\ref{LP}) we use \eqref{limk}.
It follows from results of \cite{KKP} (Proposition 4.4 and Corollary 4.1) that  for any $t>0$ the solution $\{
k_{t}^{(n)} \}$ of the Cauchy problem (\ref{59}) satisfies condition (\ref{LP}) of Lenard positivity, see Appendix in \cite{KKPZ} for the detailed proof of this important statement.
Consequently, the limit system of correlation functions $k^{(n)}_\varrho $
also satisfies the Lenard positivity condition (\ref{LP}).

Thus Proposition \ref{propos2} implies that for any  $\varrho >0$  there exists a unique probability measure $\mu^\varrho \in {\cal M}^1_{\rm{corr}}(\Gamma)$, locally absolutely continuous with respect to a Poisson measure $\pi_\varrho$, and whose correlation functions are $\{k^{(n)}_\varrho \}$.
This completed the proof of Theorem \ref{mainth}.


\begin{thebibliography}{20}


\bibitem{BD} M.T. Barlow, J.-D. Deuschel, Invariance principle for the random conductance model with
unbounded conductances, Ann. Probab. 38, No. 1, 234-276 (2010)


\bibitem{BH}  M.T. Barlow, B.M. Hambly, Parabolic Harnack inequality and local limit theorem for percolation clusters,
Electron. J. Probab. 14, 1-26 (2009)


\bibitem{DD}  T. Delmotte, J.-D. Deuschel, On estimating the derivatives of symmetric diffusions in stationary random environment, with application to $\nabla \phi$ interface model, Probab. Theory Related Fields 133, 358-390 (2005)


\bibitem{FKO} D.L. Finkelshtein, Yu.G. Kondratiev, M.J. Oliveira, Markov evolutions and hierarchical equations in the continuum. I: one-comonent systems, Journal of Evolution Equations, 9, 197-233 (2009)

\bibitem{H} T.E. Harris, Contact interactions on a lattice, Ann. Probab. 2, 969-988 (1974)

\bibitem{HL} R. Holley, T.M. Liggett, The survival of contact processes, Ann. Probab. 6, No. 2, 198-206 (1978)

\bibitem{KK2002}
Kondratiev, Y., Kuna, T.: Harmonic analysis on configuration space.
{I}.
  {G}eneral theory.
\newblock Infin. Dimens. Anal. Quantum Probab. Relat. Top. \textbf{5}(2),
  201--233 (2002)



\bibitem{KKP} Yu. Kondratiev, O. Kutoviy, S. Pirogov, Correlation functions
and invariant measures in continuous contact model, Ininite
Dimensional Analysis, Quantum Probability and Related Topics Vol.
11, No. 2, 231-258 (2008)

\bibitem{KKPZ} Yu. Kondratiev, O. Kutoviy, S. Pirogov, E. Zhizhina, Invariant measures for spatial contact model in small dimensions, Arxiv: 1812.00795, 29 November 2018.

\bibitem{KKS} Yu. G. Kondratiev, O. V. Kutoviy, S. Struckmeier, Contact model with Kawasaki dynamics in continuum,
SFB-701 Preprint, University of Bielefeld, Bielefeld, Germany (2007).


\bibitem{KPZh} Yu. Kondratiev, S. Pirogov, E. Zhizhina, A Quasispecies Continuous Contact Model
in a Critical Regime, Journal of Statistical Physics, 163(2), 357-373 (2016), doi:10.1007/s10955-016-1480-5


\bibitem{KS} Yu. G. Kondratiev and A. Skorokhod, On contact processes in continuum,
Ininite Dimensional Analysis, Quantum Probability and Related Topics
Vol. 9, 187-198 (2006)



\bibitem{L1} A. Lenard, Correlation functions and the uniqueness of the state in classical statistical mechanics,
Comm. Math. Phys. 30, 35-44 (1973).

\bibitem{L2} A. Lenard, States of classical statistical mechanical systems of infinitely many particles II. Characterization of correlation measures, Arch. Rational Mech. Anal.
59, II: 240-256 (1975).

\bibitem{Lig1985}
T.~M. Liggett.
\newblock {\em Interacting particle systems}, volume 276 of {\em Grundlehren
  der Mathematischen Wissenschaften [Fundamental Principles of Mathematical
  Sciences]}.
\newblock Springer-Verlag, New York, 1985.


\bibitem{MR} P. Mathieu, E. Remy, Isoperimetry and heat kernel decay on percolation clusters, Ann. Probab. 32, 100-128 (2004)

\bibitem{MZh} R. A. Minlos, E. A. Zhizhina, A local limit theorem for nonhomogeneous random walk on a lattice, Theory Probab. Appl., Vol. 39 (3), 490-503 (1994).


\bibitem{R} D. Ruelle, Statistical Mechanics, Benjamin (1969).

\bibitem{Tutu} V. N. Tutubalin, On the limit behaviour of composition of measures in the plane and space of Lobachevski, Theor. Probab. Appl., Vol. 7, pp.189-196 (1962).

\end{thebibliography}
\end{document}